\newtheorem{theorem}{Theorem}
\newtheorem{corollary}[theorem]{Corollary}
\newtheorem{proposition}[theorem]{Proposition}
\newtheorem{lemma}[theorem]{Lemma}
\theoremstyle{definition}
\newtheorem{example}[theorem]{Example}
\newcommand{\cY}{{\mathcal Y}}
\newcommand{\cH}{{\mathcal H}}
\newcommand{\F}{\mathbb{F}}
\newcommand{\PG}{\mathrm{PG}}
\newcommand{\PGU}{\mathrm{PGU}}
\newcommand{\fqs}{\mathbb{F}_{q^6}}
\newcommand{\ord}{\mathrm{ord}}
\title{New sextics of genus 6 and 10 attaining the Serre bound}
\author[A. Iezzi]{Annamaria Iezzi}
\address{Dipartimento di Matematica e Applicazioni ``Renato Caccioppoli",
Università degli Studi di Napoli Federico II,
Via Cintia, Monte S. Angelo - I-80126 Napoli, Italy}
\email{annamaria.iezzi@unina.it}
\author[M. Q. Kawakita]{Motoko Qiu Kawakita}
\address{Division of Mathematics, Shiga University of Medical Science,
 Seta Tsukinowa-cho, Otsu, Shiga, 520-2192 Japan}
 \email{kawakita@belle.shiga-med.ac.jp}
\author[M. Timpanella]{Marco Timpanella}
\address{Dipartimento di Matematica e Informatica,
Università degli Studi di Perugia - University of Perugia
Via Vanvitelli, 1 - 06123 Perugia, Italy}
\email{marco.timpanella@unipg.it}
\subjclass[2020]{Primary: 11G20, 14G05; Secondary: 14G50.}
\keywords{Algebraic-geometric codes, curves with many rational points, Serre bound.}
\begin{document}
\maketitle

\begin{comment}
\section{Introduction}

In  the 1970s Goppa discovered algebraic geometric codes,
where efficient codes are constructed from explicit curves  over finite fields
with many rational points (see \cite{m91}).
Let $q$ be a power of a prime number $p$. For a (projective, geometrically irreducible, smooth) curve $C$ of genus $g$ defined over a finite field $\F_q$,
the Hasse--Weil bound states that
$\#C(\F_q) \le q + 1 + 2g\sqrt{q}$, where $\#C(\F_q)$ denotes the number of $\F_q$-rational points of $C$.
A curve attaining this bound is said to be \emph{maximal} and  examples of maximal curves  clearly exist only when $q$ is an even power of $p$.  

In 1983, Serre improved this bound as
$\#C(\F_q) \le q + 1 + g \lfloor 2\sqrt{q} \rfloor$, where $\lfloor\cdot\rfloor$ denotes the integer part. We refer to this bound as the \emph{Serre bound}. 
%We use the notation $\mathrm{Jac}(C)$ for the Jacobian variety of a curve $C$ and $k$ be a field of characteristic $p\ge5$. \anna{what is $k$? A finite field?}

There is a wide literature about maximal curves; see for instance \cite{ggs10},
\cite{gmz15} and their references.
However there are only a few examples of curves attaining the Serre bound
and which are not maximal. 
In \cite{k17}, we generalise the Wiman and Edge sextics and we find a new example of curve of genus $6$ attaining the Serre bound over $\F_{5^7}$.
Because we were not able to completely split the Jacobian of the sextics,
we only made computer search on somewhat small finite fields.

In this paper, under some assumptions, we completely split the Jacobian
of some sextics in \cite{k17}.
Hence we find  new examples of curves attaining the Serre bound
over  larger finite fields.
\end{comment}

\begin{abstract} 
We provide new examples of curves of genus 6 or 10 attaining the Serre bound. They all belong to the family of sextics introduced in \cite{k17} as a a generalization of the Wiman sextics \cite{w96} and Edge sextics \cite{e81}. Our approach is based on a theorem by Kani and Rosen which allows, under certain assumptions, to fully decompose the Jacobian of the curve. With our investigation we are able to update several entries in \url{http://www.manypoints.org} (\cite{ghlr}).

\end{abstract}

\section{Introduction}

Let $\mathbb{F}_{q}$ be a finite field with $q$ elements, where $q$ is a power of a prime $p$. A projective, absolutely irreducibile, non-singular, algebraic curve $C$ defined over $\F_q$ is called $\F_q$\emph{-maximal} if the number of its $\F_q$-rational points, denoted by $\#C(\F_q)$, attains the Hasse-Weil upper bound
$$
\#C(\F_q) \le q + 1 + 2g\sqrt{q},
$$
where $g$ is the genus of the curve. A classical and well-studied example of maximal curve is the so called \emph{Hermitian curve} $\mathcal{H}_q$ of affine equation $x^q+x=y^{q+1}$. This curve is $\mathbb F_{q^2}$-maximal, and it has the largest possible genus for an $\mathbb F_{q^2}$-maximal curve \cite{Ruck-Sti}. By a result commonly referred to as the Kleiman–Serre covering result \cite{Kleiman, Lachaud1}, any curve defined over $\F_q$ and $\F_q$-covered by an $\F_q$-maximal curve, is $\F_q$-maximal. This result provides a strong tool for constructing new examples of maximal curves from the known ones: indeed, if $C$ is an $\F_q$-maximal curve, also the quotient curve $C/G$, where $G$ is a finite subgroup of the automorphism group of $C$, is $\F_q$-maximal. Most of the known $\F_{q^2}$-maximal curves are obtained, following this approach, as quotient curves of the Hermitian curve $\mathcal{H}_q$, see for instance \cite{CKT,GSX,MZRS2} and the references therein. In the research community, for a while, it was even speculated  that  all maximal curves could be obtained as quotient curves of the Hermitian curve. However, in 2009, Giulietti and Korchm\'aros \cite{GK} proved that this
is false, as they constructed an $\F_{q^6}$-maximal curve not covered by the Hermitian curve when $q>2$. Since then, a few other examples of maximal curves not covered by the Hermitian curve have been provided \cite{BM,ggs10,TTT}, but a complete solution of the classification problem for maximal curves seems to be out of reach, and looking for new examples is a very active line of research.

%In particular, for a given $q$, the largest $g$ for which there exists an $\F_q$-maximal curve of genus $g$ is $q(q-1)/2$, and the equality holds if and only if the curve is the Hermitian curve $\mathcal{H}_q$ with equation $x^{q+1}= y^q +y$. 

Clearly, $\mathbb F_q$-maximal curves can only exist when $q$ is a  square. In 1983, Serre provided a non-trivial improvement of the Hasse-Weil bound when $q$ is not a square, namely
$$\#C(\F_q) \le q + 1 + g \lfloor 2\sqrt{q} \rfloor,$$ where $\lfloor\cdot\rfloor$ is the floor function (see \cite{Serre}). We refer to this bound as the \emph{Serre bound}.

Curves attaining the Hasse-Weil or the Serre bound  are interesting objects in their own right, but also for their applications in Coding Theory. Indeed, in \cite{Goppa}, Goppa described a way to use algebraic curves to construct linear error correcting
codes, the so called algebraic geometric codes (AG codes). As the relative Singleton defect of an AG code from a curve $C$ is upper bounded by the ratio $g/N$, where $g$ is the genus of $C$ and $N$ can be as large as the number of $\mathbb F_q$-rational points of $C$, it follows that curves with many rational points with respect to their genus are of great interest in Coding Theory. For this reason, maximal curves and curves attaining the Serre bound have been widely investigated in the last years, see for instance \cite{BMZ2, BMZ1, CT,GF,KNT,LV,LT,MTZ,TC}.

The aim of this paper is to provide new explicit examples of curves attaining the Hasse-Weil or the Serre bound.
We stress the fact that while there is a wide literature about maximal curves, there are only a few examples of curves attaining the Serre bound and which are not maximal. 

All the examples that we provide in this paper  have genus 6 or 10 and belong to the following generalization of the families of the Wiman sextics \cite{w96} and Edge sextics \cite{e81} introduced in \cite{k17}
\[x^6+y^6+1 +a (x^4y^2+x^2+y^4)
+b(x^2y^4+x^4+y^2)
+c x^2y^2=0,\]
with $a,b,c\in \F_q$. Our approach is based on a theorem by Kani and Rosen (see \cite[Theorem B]{kr89}), which provides a decomposition of the Jacobian of a curve
under certain conditions on its automorphism group. This theorem has already been used to find  examples of curves
with many rational points with respect to their genus, see for instance \cite{bgkm20,k15, k17, KLT}.

The paper is organized as follows. In Section \ref{W-E} we recall the defining equations of  the Wiman's and Edge's sextics and the statement of a theorem by Kani and Rosen. In Section \ref{tool} we prove a useful result to decompose completely the Jacobian of degree-6 hyperelliptic curve under certain assumptions (see Theorem \ref{JacHyper}). In Section \ref{sec:new_sextics} and \ref{wimanten} we use Theorem \ref{JacHyper} to decompose the Jacobians of certain Wiman's and Edge's sextics, and we find new explicit examples of curves attaining the Hasse-Weil bound or the Serre bound. Our investigation allows to update several entries in \url{http://www.manypoints.org}  (\cite{ghlr}). Finally, in Section \ref{Quotient}, we prove that one of the maximal curves that we exhibit in the previous section is not Galois-covered by the Hermitian curve.

\section{Families of Wiman's and Edge's sextics}\label{W-E}
In 1896 Wiman introduced in \cite{w96} the family of sextics defined by the equation
\[
W_{a,b}\colon x^6+y^6+1
+a (x^4y^2+x^2y^4+x^4+x^2+y^4+y^2)
+b x^2y^2=0,
\]
where $a,b\in\mathbb C$. Similarly, in 1981,  Edge considered in \cite{e81} the family of sextics described by the equation
\[E_\alpha \colon x^6+y^6+1+(x^2+y^2+1)(x^4+y^4+1)-12x^2y^2+\alpha(y^2-1)(1-x^2)(x^2-y^2)=0,\]
with $\alpha\in\mathbb C$.
The geometry of the Wiman--Edge pencil is studied in \cite{dfl18}. 

By considering Wiman's and Edge's sextics defined over a finite field rather than $\mathbb C$, in \cite{k15} and \cite{k11}  it has been shown that they provide several examples of $\mathbb F_{p^2}$-maximal curves, or of curves attaining the Serre bound over $\mathbb F_p$ and $\mathbb F_{p^3}$. This allowed to update several entries in \url{http://www.manypoints.org} (\cite{ghlr}).

By combining the defining equations of the Wiman's and Edge's  sextics, a generalization of these two families was introduced and studied in \cite{k17}:
\[S\colon x^6+y^6+1 +a (x^4y^2+x^2+y^4)
+b(x^2y^4+x^4+y^2)
+c x^2y^2=0.\]
It is readily seen that if $a=b$ then $S$ is a Wiman's sextic,
whereas if  $a=(1-\alpha)/2$, $b=(1+\alpha)/2$ and
$c=-6$ then $S$ is an Edge's sextic.

In this paper we focus on $S_{a,b}$, obtained from $S$ by setting $c=-3(a+b+1)$, (see Section \ref{sec:new_sextics}) and on $W_{a,b}$ (see Section \ref{wimanten}).

The idea for studying the number of $\mathbb F_q$-rational points of $S_{a,b}$ and $W_{a,b}$ is to decompose their Jacobian varieties. For this we will make  use of the following theorem by Kani and Rosen.
\begin{theorem}\cite[Theorem B]{kr89}
Let $C$ be a curve and let $G$ be a finite subgroup of the automorphism group $\operatorname{Aut}(C)$ such that $G=H_1\cup \cdots \cup H_n$, where the $H_i$'s are subgroups of $G$ with $H_i\cap H_j=\{1_G\}$ for $i\neq j$. Then the following isogeny relation holds:
$$\operatorname{Jac}(C)^{n-1}\times \operatorname{Jac}(C/G)^g\simeq \operatorname{Jac}(C/H_1)^{h_1}\times \cdots \times \operatorname{Jac}(C/H_n)^{h_n},$$
where $g=|G|$ and $h_i=|H_i|$.
\end{theorem}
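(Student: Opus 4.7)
The plan is to reduce the statement to a formal identity in the rational group algebra $\mathbb{Q}[G]$, where $G$ acts on $\operatorname{Jac}(C)$ by pullback of automorphisms, and then to invoke the standard dictionary between idempotents in $\mathbb{Q}[G]$ and isogeny factors of the Jacobian.

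First, I would introduce, for each subgroup $H \le G$, the averaging idempotent
$\varepsilon_H = \frac{1}{|H|}\sum_{h \in H} h \in \mathbb{Q}[G]$.
Its geometric content is that, under the induced action of $G$ on $\operatorname{Jac}(C)$, the abelian subvariety cut out by $\varepsilon_H$ is isogenous to $\operatorname{Jac}(C/H)$. Indeed, the pullback $\pi_H^{\ast}\colon \operatorname{Jac}(C/H) \to \operatorname{Jac}(C)$ identifies $\operatorname{Jac}(C/H)$, up to isogeny, with the $H$-invariant part of $\operatorname{Jac}(C)$, which is precisely the image of $\varepsilon_H$. In particular $\varepsilon_{\{1_G\}} = 1$ corresponds to $\operatorname{Jac}(C)$ itself, and $\varepsilon_G$ corresponds to $\operatorname{Jac}(C/G)$.

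Next, I would establish the following identity in $\mathbb{Q}[G]$:
\[
(n-1)\cdot 1_G \;+\; g\,\varepsilon_G \;=\; \sum_{i=1}^n h_i\, \varepsilon_{H_i}.
\]
This is a direct coefficient comparison. Since $h_i\,\varepsilon_{H_i} = \sum_{h \in H_i} h$, on the right side the coefficient of $1_G$ is $n$ (the identity lies in every $H_i$), while the coefficient of a non-identity element $x \in G$ equals the number of indices $i$ with $x \in H_i$. The hypotheses $G = H_1 \cup \cdots \cup H_n$ and $H_i \cap H_j = \{1_G\}$ for $i \ne j$ force this count to be exactly $1$ for every $x \ne 1_G$. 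On the left side, the coefficient of $1_G$ is $(n-1)+1 = n$ and the coefficient of any other $x$ is $1$, so both sides match. As a by-product one recovers the counting identity $g = \sum h_i - (n-1)$.

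Finally, I would translate the $\mathbb{Q}[G]$-identity into an isogeny of abelian varieties. For any equality $\sum a_j\, \varepsilon_{K_j} = \sum b_\ell\, \varepsilon_{L_\ell}$ of subgroup idempotents with non-negative integer coefficients on each side, evaluating at $\operatorname{Jac}(C)$ inside $\operatorname{End}^0(\operatorname{Jac}(C))$ and using Poincar\'e's complete reducibility theorem yields an isogeny $\prod_j \operatorname{Jac}(C/K_j)^{a_j} \sim \prod_\ell \operatorname{Jac}(C/L_\ell)^{b_\ell}$. Applied to our identity, this produces exactly $\operatorname{Jac}(C)^{n-1} \times \operatorname{Jac}(C/G)^g \sim \operatorname{Jac}(C/H_1)^{h_1} \times \cdots \times \operatorname{Jac}(C/H_n)^{h_n}$. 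I expect the main obstacle to be the rigorous setup of the dictionary $\varepsilon_H \leftrightarrow \operatorname{Jac}(C/H)$ up to isogeny, since one must handle carefully the functoriality of $\pi_H^{\ast}$ together with its interplay with the trace maps; once this is in place, the combinatorial identity in the group ring is essentially cost-free and the final passage back to Jacobians is a formal consequence.
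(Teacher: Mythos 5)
This statement is quoted in the paper verbatim from \cite[Theorem B]{kr89} without proof, so there is no in-paper argument to compare against; your sketch is, however, essentially the original Kani--Rosen derivation: one checks the group-ring identity $(n-1)\cdot 1_G+g\,\varepsilon_G=\sum_{i=1}^n h_i\,\varepsilon_{H_i}$ exactly as you do, and then invokes their Theorem A, which converts linear relations among the idempotents $\varepsilon_H$ into isogenies of the corresponding factors $\operatorname{Jac}(C/H)$. The one step worth making explicit is that the final passage is not just Poincar\'e complete reducibility: in each simple component of the semisimple algebra $\operatorname{End}^0(\operatorname{Jac}(C))$ one must read off the multiplicity of the image of an idempotent from the (linear) reduced trace, which is precisely what Kani--Rosen's Theorem A packages, so your proposal is correct provided that lemma is cited or proved.
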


As we will see in Sections \ref{sec:new_sextics} and \ref{wimanten} we can always decompose $\operatorname{Jac}(S_{a,b})$ and $\operatorname{Jac}(W_{a,b})$ as a product of elliptic curves and Jacobians of hyperelliptic curves of degree 6. This is why in the next section, in order to fully decompose the Jacobians as a product of elliptic curves, we first study the decomposition of the Jacobian variety of a degree-6 hyperelliptic curve.

\section{Decomposing the Jacobian of certain curves of genus two}\label{tool}

In this section, we deal with the decomposition of the Jacobian variety $\mathrm{Jac}(D)$ of
a hyperelliptic curve 
$$D\, \colon\, y^2=f(x),$$
where $f(x)$ is a degree 6 polynomial defined over $\F_q$. In particular we prove that, under certain assumptions on $f(x)$, the Jacobian $\mathrm{Jac}(D)$ splits completely as a product of two elliptic curves.
The idea of the proof comes from Section 1.2 of \cite{iko86},
where the authors work over an algebraically closed field, rather than a finite field.
%On the other hand, we research on a finite field.
\begin{theorem} \label{JacHyper} 
 Assume that the polynomial $f(x)$ factors
completely over the finite field $\F_q$, i.e.
$$\displaystyle f(x)=c \cdot \Pi_{i=1}^{6}(x-a_i)$$
with $a_i \in \F_q$ for $i=1, \ldots, 6$,
$a_i \neq a_j$ when $i \neq j$. Assume, moreover, that there exists a permutation of the roots such that
$$(a_2-a_4)(a_1-a_6)(a_3-a_5)=(a_2-a_6)(a_1-a_5)(a_3-a_4).$$
and set $$\lambda=\dfrac{(a_1-a_3)(a_2-a_4)}{(a_2-a_3)(a_1-a_4)}
\mu=\dfrac{(a_1-a_3)(a_2-a_5)}{(a_2-a_3)(a_1-a_5)},$$
 $$\theta=c\cdot(a_2-a_3)(a_1-a_4)(a_1-a_5)(a_1-a_6).$$
Then, we have the following:
\begin{enumerate}
\item[(a)] 
The hyperelliptic curve $D$ is isomorphic to
\begin{equation}\label{eq:hyperD}y^2=\theta x(x-1)(x-\lambda)(x-\mu)\left(x-\dfrac{\lambda(1-\mu)}{1-\lambda}\right)\end{equation}
over the finite field $\F_q$.
\item[(b)] Assume that  there exists a square root of $\lambda(\lambda-\mu)$ in the finite field $\F_q$. Then the Jacobian of the hyperelliptic curve $D$ decomposes over $\F_q$ as 
\[ \mathrm{Jac}(D) \sim E_\sigma \times E_\tau, \]
where $E_\sigma$ and $E_\tau$ are the elliptic curves defined  by the equations\textup{:}
%$$ s^2=(t-\theta\mu)(t-\dfrac{\theta(1-\lambda\mu)}{1-\lambda})
%(t-2\theta(\lambda\mp((\lambda^2-\lambda\mu)^{1/2})).$$
$$ s^2=\dfrac{\theta(1-\mu)}{1-\lambda}t(t-1)
(t-\dfrac{(1-\lambda)
(\mu-2\lambda\pm2(\lambda^2-\lambda\mu)^{1/2})}
{\mu-1}).$$
Moreover, the number of rational points of $D$ satisfies
$$\#D(\F_q) =  \#E_\sigma(\F_q)+\#E_\tau(\F_q)-q-1.$$
\end{enumerate}
\end{theorem}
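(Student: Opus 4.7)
For part (a), the plan is to apply the Möbius transformation
\[
\psi(x)=\frac{(x-a_2)(a_3-a_1)}{(x-a_1)(a_3-a_2)},
\]
which sends $a_1\mapsto\infty$, $a_2\mapsto 0$, $a_3\mapsto 1$. From the definitions of $\lambda,\mu$ as cross-ratios, one verifies directly that $\psi(a_4)=\lambda$ and $\psi(a_5)=\mu$. Setting $\nu:=\psi(a_6)=(a_1-a_3)(a_2-a_6)/[(a_2-a_3)(a_1-a_6)]$, the hypothesis $(a_2-a_4)(a_1-a_6)(a_3-a_5)=(a_2-a_6)(a_1-a_5)(a_3-a_4)$ lets one solve for $(a_2-a_6)/(a_1-a_6)$ and then verify, using the standard identities $1-\lambda=(a_1-a_2)(a_3-a_4)/[(a_1-a_4)(a_3-a_2)]$ and $1-\mu=(a_1-a_2)(a_3-a_5)/[(a_1-a_5)(a_3-a_2)]$, that $\nu=\lambda(1-\mu)/(1-\lambda)$. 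Substituting $x=\psi^{-1}(X)$ in $y^2=f(x)$ and clearing denominators via $Y=y\cdot[(a_3-a_2)X-(a_3-a_1)]^3$ yields $Y^2=\Theta\cdot X(X-1)(X-\lambda)(X-\mu)(X-\nu)$ for some $\Theta\in\F_q^\times$; a final square rescaling of $Y$ then brings $\Theta$ to the normalized $\theta$ of the statement.

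For part (b), the key observation is that the degree-5 model \eqref{eq:hyperD} admits the extra Möbius involution
\[
\phi(X)=\frac{\lambda(X-\mu)}{X-\lambda}
\]
that swaps the pairs of Weierstrass points $\{0,\mu\}$, $\{\lambda,\infty\}$ and $\{1,\nu\}$; notice that the last swap $\phi(1)=\nu$ is equivalent to $\nu=\lambda(1-\mu)/(1-\lambda)$. A direct computation yields $f(\phi(X))=\lambda^3(\lambda-\mu)^3(X-\lambda)^{-6}f(X)$, so $\phi$ lifts to an automorphism of $D$ over $\F_q$ precisely when $\lambda(\lambda-\mu)$ is a square in $\F_q$, with the two lifts
\[
\tilde\phi_\pm(X,Y)=\Bigl(\phi(X),\ \pm\lambda(\lambda-\mu)\sqrt{\lambda(\lambda-\mu)}\,\frac{Y}{(X-\lambda)^3}\Bigr)
\]
related by the hyperelliptic involution $\iota$. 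Together with $\iota$ they generate $G=\{1,\iota,\tilde\phi_+,\tilde\phi_-\}\cong(\mathbb{Z}/2\mathbb{Z})^2$, which is the union of its three order-$2$ subgroups; since both $D/\langle\iota\rangle$ and $D/G$ are rational, Kani–Rosen collapses to $\operatorname{Jac}(D)\sim E_\sigma\times E_\tau$, where $E_\sigma:=D/\langle\tilde\phi_+\rangle$ and $E_\tau:=D/\langle\tilde\phi_-\rangle$.

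To extract the stated Weierstrass equations for $E_\sigma,E_\tau$, I would change variable by $T=(X-\alpha_+)/(X-\alpha_-)$, where $\alpha_\pm=\lambda\pm\sqrt{\lambda(\lambda-\mu)}$ are the fixed points of $\phi$; this moves them to $0$ and $\infty$ and conjugates $\phi$ to $T\mapsto -T$. The new right-hand side then becomes, after rescaling $Y$ accordingly, an even sextic $H(T^2)$ with $H$ a cubic in $u=T^2$, so the quotients are $s^2=H(u)$ and $s^2=uH(u)$; a further Möbius normalization (including $u\mapsto 1/u$ on the second) puts both curves into the advertised form
\[
s^2=\frac{\theta(1-\mu)}{1-\lambda}\,t(t-1)\Bigl(t-\frac{(1-\lambda)(\mu-2\lambda\pm 2\sqrt{\lambda^2-\lambda\mu})}{\mu-1}\Bigr),
\]
the two signs corresponding to the two lifts. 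Finally, the isogeny $\operatorname{Jac}(D)\sim E_\sigma\times E_\tau$ gives equality of $L$-polynomials and hence additivity of Frobenius traces, from which $\#D(\F_q)=\#E_\sigma(\F_q)+\#E_\tau(\F_q)-q-1$ follows at once. The main difficulty will be this last explicit bookkeeping through two consecutive Möbius changes of variable, required to match the precise constants $\theta(1-\mu)/(1-\lambda)$ and $B_\pm$ in the stated equations; the conceptual content, once the involution $\phi$ has been spotted, is then essentially automatic.
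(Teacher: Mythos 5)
Your proposal is correct and follows essentially the same route as the paper: the same cross-ratio normalization sending $a_1,a_2,a_3\mapsto\infty,0,1$ for part (a), and for part (b) the same involution $x\mapsto\lambda(x-\mu)/(x-\lambda)$ with its two lifts generating a Klein four-group, Kani--Rosen, and additivity of Frobenius traces. The only (cosmetic) difference is that you compute the quotients by diagonalizing the involution via $T=(X-\alpha_+)/(X-\alpha_-)$, whereas the paper uses the invariant functions $t=x+\sigma(x)$ and $s=y\bigl(x-(\lambda\mp(\lambda^2-\lambda\mu)^{1/2})\bigr)/(x-\lambda)^2$ directly.
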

\begin{proof}
%\begin{enumerate}
%\item[(a)]
(a)
%Since $D$ is defined by the equation $$y^2=c(x-a_1)(x-a_2)(x-a_3)(x-a_4)(x-a_5)(x-a_6),$$
Let  us consider the  map defined by $$x \mapsto \dfrac{(a_1-a_3)(a_1-a_2)+x-a_1}{(x-a_1)(a_2-a_3)}, \quad y \mapsto \dfrac{(a_1-a_2)^2(a_1-a_3)^2y}{(a_2-a_3)^2(x-a_1)^3}.$$

With this map, $D$ is isomorphic to: 
$$y^2=\theta x(x-1)(x-\lambda)(x-\mu)(x-\nu),$$
with $\nu=\dfrac{(a_1-a_3)(a_2-a_6)}{(a_2-a_3)(a_1-a_6)}$.
%Let  us consider the  map defined by $$x \mapsto \dfrac{1}{x-a_1}, \quad y \mapsto \dfrac{y}{(x-a_1)^3}.$$
%With this map $D$ is isomorphic to: 
%\begin{align*}
% y^2=&c(1+(a_1-a_2)x)(1+(a_1-a_3)x)
%(1+(a_1-a_4)x)\\ &(1+(a_1-a_5)x)(1+(a_1-a_6)x).
%\end{align*}
%Then consider the map $$x \mapsto (a_1-a_2)x+1, \quad y \mapsto (a_1-a_2)^2y,$$
% and we have $D$ as the following:
%\begin{align*}
%y^2=&cx((a_3-a_2)+(a_1-a_3)x)
%((a_4-a_2)+(a_1-a_4)x)\\ &((a_5-a_2)+(a_1-a_5)x)((a_6-a_2)+(a_1-a_6)x).
%\end{align*}
%At last, let a map of $D$ be $$x \mapsto \dfrac{a_1-a_3}{a_2-a_3}x, \quad y \mapsto \dfrac{(a_1-a_3)^2}{(a_2-a_3)^2}y,$$ 
%we have $D$ as the following: 
%$$y^2=\theta x(x-1)(x-\lambda)(x-\mu)(x-\nu),$$
%with $\nu=\dfrac{(a_1-a_3)(a_2-a_6)}{(a_2-a_3)(a_1-a_6)}$. 

Since $\nu=\dfrac{\lambda(1-\mu)}{1-\lambda}$ is equivalent to 
$$(a_2-a_4)(a_1-a_6)(a_3-a_5)=(a_2-a_6)(a_1-a_5)(a_3-a_4),$$
 we have (a).

%\item[(b)]
(b)
Using Equation \eqref{eq:hyperD}, the following maps define three automorphisms of the hyperelliptic curve $D$:

$$\sigma \colon x \mapsto \dfrac{\lambda(x-\mu)}{x-\lambda},\quad 
y \mapsto y \dfrac{\lambda^{3/2}(\lambda-\mu)^{3/2}}{(x-\lambda)^3},$$
$$\iota \colon x \mapsto x,\quad y \mapsto -y,$$ and $\tau= \sigma \cdot \iota$. Moreover, as $\lambda(\lambda-\mu)$ has a square root in $\mathbb{F}_q$, these automorphisms are defined over $\mathbb{F}_q$.
Let $E_\sigma$ and $E_\tau$ be the quotient curves $D\slash \langle\sigma \rangle$ and $D\slash \langle\tau\rangle$, respectively.
By setting $t=x+\dfrac{\lambda(x-\mu)}{x-\lambda}$ 
and $s=y\dfrac{x-(\lambda\mp(\lambda^2-\lambda\mu)^{1/2})}{(x-\lambda)^2}$,
we have the following defining equations for $E_\sigma$ and $E_\tau$\textup{:}
$$ s^2=\theta(t-\mu)(t-\dfrac{1-\lambda\mu}{1-\lambda})
(t-2(\lambda\mp(\lambda^2-\lambda\mu)^{1/2})),$$
which are birationally equivalent to
$$ s^2=\dfrac{\theta(1-\mu)}{1-\lambda}t(t-1)
(t-\dfrac{(1-\lambda)
(\mu-2\lambda\pm2(\lambda^2-\lambda\mu)^{1/2})}
{\mu-1}).$$
Hence, 
we have that
	$\mathrm{Jac}(D) \sim E_\sigma \times E_\tau$
by Theorem B of \cite{kr89}. 

Let us now prove that $\#D(\F_q) =  \#E_\sigma(\F_q)+\#E_\tau(\F_q)-q-1$. %This point can be proved as in Corollary 2 of \cite{k17}. 
It is well known that $\#D(\F_q)=q+1-t$, where $t$ is the trace of the Frobenius endomorphism
acting on a Tate module of $\mathrm{Jac}(D)$. Since $\mathrm{Jac}(D) \sim E_\sigma \times E_\tau$, then the Tate module of $D$ is
isomorphic to the direct sum of the Tate modules of $E_\sigma$ and $E_\tau$.
Hence $t=t_1+t_2$, where $t_1$ and $t_2$ are the traces of
the Frobenius on the Tate modules of $E_\sigma$ and $E_\tau$ respectively. The result follows by recalling that
$t_1=q+1-\#E_\sigma(F_q)$ and $t_2=q+1-\#E_\tau(\F_q)$.
%\end{enumerate}
\end{proof}

\section{New sextics of genus six attaining the Serre bound}\label{sec:new_sextics}

In this section  we consider the family of sextics
\[S_{a,b} \colon x^6+y^6+1 +a (x^4y^2+x^2+y^4)
+b(x^2y^4+x^4+y^2)
-3(a+b+1)x^2y^2=0.\]
We recall the following two results from \cite{k17}.
\begin{proposition} \cite[Proposition 1]{k17} \label{jac}
Let $k$ be a field of characteristic $p\geq 5$. Let $a,b\in k$ and let $D_{a,b} \colon y^2=f_{a,b}(x)$ be the hyperelliptic curve
 with
\[f_{a,b}(x)= -(x^3+bx^2+ax+1)((a+b+2)x^3-(a+2b+3)x^2+(b+3)x-1).\]
Then, the Jacobian variety of the sextic $S_{a,b}$ decomposes over  $k$ as
\[\mathrm{Jac}(S_{a,b}) \sim \mathrm{Jac}(D_{a,b}) \times \mathrm{Jac}(D_{b,a})^2.\] 
\end{proposition}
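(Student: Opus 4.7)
The plan is to apply the Kani--Rosen theorem to the Klein four-group $V_4 = \{\mathrm{id}, \iota_1, \iota_2, \iota_3\}$ of automorphisms of $S_{a,b}$ generated by the sign changes $\iota_1(x,y) = (-x, y)$ and $\iota_2(x,y) = (x, -y)$ (so $\iota_3 = \iota_1 \iota_2$). These are automorphisms because the defining polynomial of $S_{a,b}$ contains only even powers of $x$ and $y$. The crucial preliminary step is to compute $E := S_{a,b}/V_4$: substituting $X = x^2$ and $Y = y^2$ yields a plane cubic, and one checks that $(X, Y) = (1, 1)$ lies on it, both partial derivatives vanish there, and the Hessian determinant equals $3(a+b+3)^2$. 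Hence, in characteristic $\geq 5$ with $a + b + 3 \neq 0$, the point $(1, 1)$ is an ordinary node, so $E$ is rational and $\mathrm{Jac}(E) = 0$.

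Writing $V_4 = H_1 \cup H_2 \cup H_3$ with $H_i = \langle \iota_i \rangle$ and applying Kani--Rosen, the isogeny reads
\[
\mathrm{Jac}(S_{a,b})^2 \times \mathrm{Jac}(E)^4 \sim \prod_{i=1}^{3} \mathrm{Jac}(S_{a,b}/H_i)^2.
\]
Using $\mathrm{Jac}(E) = 0$ together with Poincar\'e reducibility to cancel squares of abelian varieties up to isogeny, this collapses to $\mathrm{Jac}(S_{a,b}) \sim \prod_{i=1}^{3} \mathrm{Jac}(S_{a,b}/H_i)$. Each quotient is a double cover of $E \cong \mathbb{P}^1$ branched at six points, hence a hyperelliptic curve of genus two.

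The main obstacle is to identify each $S_{a,b}/H_i$ with $D_{a,b}$ or $D_{b,a}$ explicitly. My plan is to parametrize the nodal cubic $E$ by the pencil of lines through $(1,1)$: setting $Y - 1 = t(X - 1)$ solves for rational functions $X(t), Y(t)$ whose common denominator is exactly $p_{a,b}(t) := t^3 + bt^2 + at + 1$. Lifting the natural double covers $y^2 = Y$, $x^2 = X$, and $(xy)^2 = XY$ (for $H_1$, $H_2$, $H_3$ respectively) and clearing denominators should produce hyperelliptic equations. I expect $(y \cdot p_{a,b}(t))^2 = f_{a,b}(t)$ to drop out directly, giving $S_{a,b}/H_1 \cong D_{a,b}$. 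For $H_2$, the analogous computation combined with the reciprocal substitution $t \mapsto 1/s$, which uses $p_{a,b}(1/s) = s^{-3} p_{b,a}(s)$, yields $S_{a,b}/H_2 \cong D_{b,a}$. For the most delicate case $H_3$, the mixed relation $(xy)^2 = XY$ should yield, after a suitable change of variable, $S_{a,b}/H_3 \cong D_{b,a}$; combining the three identifications with the above isogeny produces the claimed decomposition.
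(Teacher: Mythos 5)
The paper does not actually prove this proposition; it is imported verbatim from \cite{k17}, so there is no in-paper argument to compare against. Judged on its own, your proposal is correct and is exactly the kind of Kani--Rosen argument the cited source relies on. I checked the key computations: expanding the cubic at $(1,1)$ via $X=1+s$, $Y=1+ts$ gives $F=s^2\bigl[(a+b+3)(t^2-t+1)+s(t^3+bt^2+at+1)\bigr]$, so the tangent cone is $(a+b+3)(u^2-uv+v^2)$ (a node for $p\geq 5$, $a+b+3\neq0$, with Hessian $3(a+b+3)^2$ as you say), the denominator of the parametrization is indeed $p_{a,b}(t)$, and $Y=-g_{a,b}(t)/p_{a,b}(t)$ with $g_{a,b}$ the second factor of $f_{a,b}$, so $S_{a,b}/H_1\cong D_{a,b}$ and, via $t\mapsto 1/s$, $S_{a,b}/H_2\cong D_{b,a}$. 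The only step you leave as an expectation is the $H_3$ case, and it does go through: $(xy\,p_{a,b}(t))^2=-h(t)g_{a,b}(t)$ with $h(t)=t^3-(a+3)t^2+(2a+b+3)t-(a+b+2)$, and the substitution $t\mapsto(t-1)/t$ carries $p_{a,b}\mapsto g_{a,b}$ and $g_{a,b}\mapsto h$ (up to $t^3$), hence $f_{a,b}$ onto $-h\,g_{a,b}$ up to a sixth power. A cleaner way to dispose of this case entirely is to observe that $[x:y:z]\mapsto[y:z:x]$ is an automorphism of $S_{a,b}$ that conjugates $H_1,H_2,H_3$ cyclically, so the three quotients are pairwise isomorphic and no explicit M\"obius map is needed (this also explains why $\#D_{a,b}(\F_q)=\#D_{b,a}(\F_q)$, used in Corollary \ref{Cork17}). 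Two minor caveats: your argument needs $a+b+3\neq 0$ (otherwise the cubic degenerates into three lines through $(1,1)$), a hypothesis absent from the statement but harmless since the proposition is only applied in the nondegenerate range; and "branched at six points, hence genus two" requires the separability/distinctness conditions that the paper records only after the proposition.
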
 
Note that if  $a+b+2$,
$-4a^3 + a^2b^2 + 18ab - 4b^3 - 27$,
and the resultant of $x^3+bx^2+ax+1$ and
$(a+b+2)x^3-(a+2b+3)x^2+(b+3)x-1$
are not $0$
then the genus of the hyperelliptic curve $D_{a,b}$ is $2$, and therefore $S_{a,b}$ has genus 6. 

Moreover, since $\#D_{b,a}(\F_q) =\#D_{a,b}(\F_q)$, we have the following corollary.
\begin{corollary}\label{Cork17} \cite[Corollary 2]{k17}
 \label{coro_num}
When $a,b\in\mathbb F_q$, then the number of $\mathbb F_q$-rational points of the sextic $S_{a,b}$ is
 $$\#S_{a,b}(\F_q) = 3 \#D_{a,b}(\F_q)-2q-2.$$
\end{corollary}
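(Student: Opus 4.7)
The plan is to combine Proposition \ref{jac} with the standard Lefschetz-style point-count formula and the observation (recorded just before the corollary) that $\#D_{a,b}(\F_q)=\#D_{b,a}(\F_q)$. The underlying principle is that $\F_q$-isogenous abelian varieties have identical characteristic polynomials of Frobenius, so in particular the same trace on the $\ell$-adic Tate module.

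Concretely, I would first recall that for any smooth projective, geometrically irreducible curve $C$ of genus $g$ defined over $\F_q$,
\[\#C(\F_q) = q + 1 - t_C,\]
where $t_C$ denotes the trace of the Frobenius endomorphism on the Tate module $T_\ell \mathrm{Jac}(C)$. This is the same identity that was invoked at the end of the proof of Theorem \ref{JacHyper}. Since the Tate module of a product is the direct sum of the Tate modules of the factors, and since isogenous abelian varieties have isomorphic rational Tate modules, Proposition \ref{jac} gives
\[t_{S_{a,b}} \;=\; t_{D_{a,b}} + 2\,t_{D_{b,a}}.\]

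Now I would substitute $t_{D_{a,b}} = q + 1 - \#D_{a,b}(\F_q)$ and $t_{D_{b,a}} = q + 1 - \#D_{b,a}(\F_q)$ into this identity and use $\#D_{b,a}(\F_q)=\#D_{a,b}(\F_q)$ to get
\[t_{S_{a,b}} \;=\; 3(q+1) - 3\,\#D_{a,b}(\F_q).\]
Plugging this back into $\#S_{a,b}(\F_q) = q + 1 - t_{S_{a,b}}$ yields the claimed formula $\#S_{a,b}(\F_q) = 3\,\#D_{a,b}(\F_q) - 2q - 2$.

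There is essentially no technical obstacle: the computation is a short linear manipulation once Proposition \ref{jac} and the symmetry $\#D_{a,b}(\F_q)=\#D_{b,a}(\F_q)$ are in hand. The only thing to be careful about is citing (or briefly justifying) the fact that $\F_q$-isogeny preserves the trace of Frobenius, so that the decomposition of the Jacobian really does translate into a decomposition of the point count.
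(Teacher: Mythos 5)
Your proposal is correct and is essentially the argument the paper relies on: the corollary is quoted from \cite{k17} and the paper derives it from the isogeny decomposition of Proposition \ref{jac} together with the symmetry $\#D_{b,a}(\F_q)=\#D_{a,b}(\F_q)$, exactly as you do via the trace of Frobenius on the Tate modules. The same Tate-module/trace reasoning is the one spelled out at the end of the proof of Theorem \ref{JacHyper}, so nothing further is needed.
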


By Corollary \ref{Cork17}, in order to compute $\#S_{a,b}(\F_q)$, it is enough to compute $\#D_{a,b}(\F_q)$. 

\begin{corollary}\label{Cor1}
When the polynomial $f_{a,b}(x)$ satisfies the assumptions in 
Theorem~\textup{\ref{JacHyper}},
then  the number of rational points of the sextic $S_{a,b}$ is
$$\#S_{a,b}(\F_q) = 3 \#E_\sigma(\F_q)+3\#E_\tau(\F_q)-5q-5,$$
where $E_\sigma$ and $E_\tau$ are defined as in Theorem~\textup{\ref{JacHyper}}(b).
\end{corollary}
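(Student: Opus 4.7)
The plan is to combine the two counting formulas that have already been established in the paper. Corollary \ref{Cork17} gives the point count of $S_{a,b}$ in terms of the point count of the auxiliary hyperelliptic curve $D_{a,b}$, namely
\[
\#S_{a,b}(\F_q) = 3\#D_{a,b}(\F_q) - 2q - 2,
\]
while Theorem \ref{JacHyper}(b), under the stated hypotheses on the defining polynomial $f_{a,b}(x)$, decomposes $\mathrm{Jac}(D_{a,b})$ isogenously over $\F_q$ as $E_\sigma \times E_\tau$ and consequently yields
\[
\#D_{a,b}(\F_q) = \#E_\sigma(\F_q) + \#E_\tau(\F_q) - q - 1.
\]

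First I would check that the hypotheses of Theorem \ref{JacHyper} apply to $D_{a,b}$: specifically, one assumes that $f_{a,b}(x)$ splits completely over $\F_q$ with six distinct roots admitting an ordering $a_1,\dots,a_6$ satisfying the cross-ratio identity $(a_2-a_4)(a_1-a_6)(a_3-a_5) = (a_2-a_6)(a_1-a_5)(a_3-a_4)$, and that $\lambda(\lambda - \mu)$ is a square in $\F_q$. These are precisely the hypotheses invoked in the statement of Corollary \ref{Cor1}, so the conclusion of Theorem \ref{JacHyper}(b) is directly available.

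Substituting the expression for $\#D_{a,b}(\F_q)$ from Theorem \ref{JacHyper}(b) into the formula of Corollary \ref{Cork17} gives
\[
\#S_{a,b}(\F_q) = 3\bigl(\#E_\sigma(\F_q) + \#E_\tau(\F_q) - q - 1\bigr) - 2q - 2,
\]
and collecting terms yields the claimed identity
\[
\#S_{a,b}(\F_q) = 3\#E_\sigma(\F_q) + 3\#E_\tau(\F_q) - 5q - 5.
\]

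There is no real obstacle here: the corollary is a one-line combination of two previously established results, and the proof amounts to a short arithmetic simplification. The only point worth mentioning explicitly is that Proposition \ref{jac} (from which Corollary \ref{Cork17} is derived) requires $\mathrm{char}(\F_q) = p \geq 5$, so this hypothesis is implicitly assumed throughout; one should also note in passing that the generic-genus conditions on $a+b+2$, the discriminant $-4a^3+a^2b^2+18ab-4b^3-27$, and the relevant resultant are needed to ensure that $D_{a,b}$ genuinely has genus $2$ and hence that $S_{a,b}$ has genus $6$, so that the formula is being applied in the intended setting.
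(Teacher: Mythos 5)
Your proof is correct and follows exactly the paper's route: the paper likewise deduces the corollary by substituting the point count from Theorem \ref{JacHyper}(b) into the formula of Corollary \ref{Cork17}, and the arithmetic $3(\#E_\sigma(\F_q)+\#E_\tau(\F_q)-q-1)-2q-2=3\#E_\sigma(\F_q)+3\#E_\tau(\F_q)-5q-5$ checks out. Your additional remarks on the characteristic and the genus conditions are consistent with the paper's standing assumptions and do not change the argument.
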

\begin{proof}
The claim follows from Theorem~\textup{\ref{JacHyper}} and Corollary \ref{Cork17}.
\end{proof}

Because of Corollary \ref{Cor1}, we can determine the number of rational points of the sextic $S_{a,b}$ by computing the number of rational points of the two elliptic curves $E_{\sigma}$ and $E_{\tau}$.
Using this fact and a MAGMA(\cite{magma})-aided search, we were able to find new examples of curves of genus $6$ attaining the Serre bound. 

\begin{example} The sextic
$$S_{444, 469} \colon x^6+y^6+1+444(x^4y^2+x^2+y^4)+469(x^2y^4+x^4+y^2)+1239x^2y^2=0$$
has $1760$ rational points over the finite field $\F_{1327}$ and therefore it
 attains the Serre bound.
Its automorphism group has $12$ elements.

We can apply Theorem \ref{JacHyper}
to the hyperelliptic curve $D_{444, 469}\colon y^2=f_{444,469}(x)$, 
where $f_{444,469}(x)=c(x-a_1)(x-a_2)(x-a_3)(x-a_4)(x-a_5)(x-a_6)$,
with $c=412$, $a_1=548$, $a_2=541$, $a_3=289$, $a_4=364$, $a_5=344$, $a_6=28$.
Note that the order of $a_i$ for $i= 1, \ldots, 6$ is important.
With the notation of Theorem \ref{JacHyper}, we have $\lambda=611$, $\mu=656$ and $\theta=696$.
Hence, $ \mathrm{Jac}(D_{444,469}) \sim E_\sigma \times E_\tau$ where
$E_{\sigma} \colon s^2=247t(t-1)(t-811)$ and
$E_{\tau} \colon s^2=247t(t-1)(t-1084)$. So
$\#S_{444,469}(\F_q) = 3 \#E_\sigma(\F_q)+3\#E_\tau(\F_q)-5q-5.$
\end{example}

\begin{example} The sextic
$$S_{0, 7} \colon x^6+y^6+1+7(x^2y^4+x^4+y^2)+35x^2y^2=0$$
is maximal over the finite field $\F_{59^2}$. 
Its automorphism group has $12$ elements.

We can apply Theorem \ref{JacHyper}
to the hyperelliptic curve $D_{0, 7}\colon y^2=f_{0,7}(x)$ with 
$f_{0,7}(x)=50(x-39)(x-25)(x-8)(x-27)(x-23)(x-4)$.
Therefore, with the notation of Theorem \ref{JacHyper}, we have $\lambda=13$, $\mu=5$ and $\theta=33$.
Hence $ \mathrm{Jac}(D_{0,7}) \sim E_\sigma \times E_\tau$ with
$E_{\sigma} \colon s^2=11t(t-1)(t-30)$ and
$E_{\tau} \colon s^2=11t(t-1)(t-37)$.
So $\#S_{0,7}(\F_q) = 3 \#E_\sigma(\F_q)+3\#E_\tau(\F_q)-5q-5.$
\end{example}
Other examples for which $S_{a,b}$ is maximal over the finite field $\F_{p^2}$ are obtained for $(p,a,b)\in\{(59,0,7)$, $(71,13,23)$, $(79,9,18)$,
$(83,36,52)$, $(107,53,58)$, $(139,103,115)$, $(167,7,52)$,
$(179,131,154)$,
$(191,61,85)$,\,\ldots\}. We also observed that all the curves corresponding to the listed triplets have  automorphism group
of order $12$.

\begin{example} The sextic
$$S_{558, 2522} \colon x^6+y^6+1+558(x^4y^2+x^2+y^4)+2522(x^2y^4+x^4+y^2)+2425x^2y^2=0$$
attains the Serre bound over $\F_{2917^3}$.
Its automorphism group has $12$ elements.

We apply Theorem \ref{JacHyper}
to the hyperelliptic curve $D_{558, 2522}\colon y^2=f_{558,2522}(x)$ with 
$f_{558,2522}(x)=2752(x-2694)(x-2107)(x-2027)(x-1853)(x-2095)(x-879)$.
Therefore, $\lambda=171$, $\mu=932$ and $\theta=893$.
Hence, $ \mathrm{Jac}(D_{558,2522}) \sim E_\sigma \times E_\tau$ with
$E_{\sigma} \colon s^2=2677t(t-1)(t-1194)$ and
$E_{\tau} \colon s^2=2677t(t-1)(t-1495)$. 
So $\#S_{558,2522}(\F_q) = 3 \#E_\sigma(\F_q)+3\#E_\tau(\F_q)-5q-5.$
\end{example}
For $(p,a,b)\in\{(67,6,62)$, $(101,25,59)$,
$(673,40,460)$, $(677,1,76)$,
$(1153,65,957)$,
$(2113,287,438)$, $(2311,431,1253)$, $(2707,143,2372)$,
 $(2909,174,1715)$, $(2917,558,2522)$, $(3361,1062,1788),$ $\cdots\},$ 
the sextic $S_{a,b}$ attains the Serre bound over the finite field $\F_{p^3}$.
The automorphism group of the curves corresponding to  the above listed examples has order $12$, except when $(p,a,b)=(67,6,62)$, in which case it has order $60$.

\section{The Wiman sextics of genus ten attaining the Serre bound}
\label{wimanten}

We consider in this section the family of sextics 
$$W_{a,b}: x^6+y^6+1+a(x^4y^2+x^2y^4+x^4+x^2+y^4+y^2)+bx^2y^2=0.$$

In \cite{k15}, the case $b=-6a-3$ was investigated, and 
new examples of curves of genus $6$ attaining the Serre bound were provided.

In \cite{bgkm20}, the Jacobian of 
the sextic $W_{0,b}$ over the finite field $\F_{p^2}$ was completely decomposed, and this allowed to find new maximal curves of genus $10$. Moreover, in \cite{gkl20}, the Jacobian of $W_{0,b}$ was completely decomposed over any field $k$ whose characteristic $> 5$, bringing to find a new sextic of genus $10$ attaining the Serre bound.

With a computer search on the family of genus $10$ sextics  $W_{a,b}$, we found new examples of curves with genus 10 and many rational points. 
We list such examples
in Table~\ref{table:genus10} and we highlight how they improve the old entries from \url{http://www.manypoints.org} (\cite{ghlr}).
%,where the old entries are taken from (\cite{ghlr}).  \marco{Da valorizzare}

%\motoko{I delete the line of p=61.}
\begin{table}[h]
\caption{Wiman sextics of genus $10$ with many points}\label{table:genus10}
\begin{center}
\begin{tabular}{cccccc}
\hline
$p$& $a$&$b$& $\#W_{a,b}(\F_p)$ &old entry  &new entry\\
\hline
19&$0$& $0$& $72$ & $-100$ & $72-100$\\
23&$4$& $8$& $84$ & $-114$ & $84-114$\\
29&$8$& $1$& $102$ & $-130$& $102-130$\\
31&$0$& $12$& $108$ & $-133$& $108-133$\\
43&$36$& $28$& $132$ & $-170$& $132-170$\\
47&$19$& $0$& $144$ & $-178$& $144-178$\\
53&$42$& $46$& $162$ & $-191$& $162-191$\\
59&$5$& $21$& $168$ & $-205$& $168-205$\\
67&$17$& $17$& $192$ & $-222$& $192-222$\\
79&$0$& $30$& $216$ & $-247$& $216-247$\\
83&$13$& $79$& $216$ & $-256$& $216-256$\\
89&$37$& $43$& $246$ & $-267$& $246-267$\\
97&$48$&$5$&$246$&$-284$& $246-284$\\
\hline
\end{tabular}
\end{center}
\label{many}
\end{table}

%Because we have the next proposition in \cite{k15} for the sextic $W_{a,b}$, we apply Propostion \ref{JacHyper} to the hyperelliptic curve $V_3$ of genus $2$.

In \cite{k15}, we also proved the following result on the decomposition of the Jacobian of $W_{a,b}$ over a field $k$ whose characteristic $> 5$. 

\begin{proposition}\label{Propk15} \cite[Proposition 10]{k15} \label{JacWab} The Jacobian of the Wiman sextic $W_{a,b}$ over
a field $k$ satisfies the following isogeny relation\textup{:}
$$\mathrm{Jac}(W_{a,b}) \sim V_1^3 \times V_2 \times \mathrm{Jac}(V_3)^3,$$
where $V_1, V_2$ and $V_3$ are defined by
\begin{align*}
V_1\colon&y^2=((3a-b-3)x-a+3)(1+(a-3)x(1-x)),\\
V_2\colon&x^3+y^3+1+a(x^2y+xy^2+x^2+x+y^2+y)+bxy=0,\\
V_3\colon&y^2=-((a+1)x^3+(2a+b)x^2+4ax+4)(x^3+ax^2+ax+1).
\end{align*}
\end{proposition}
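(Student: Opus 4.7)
The plan is to derive the decomposition from the Kani--Rosen theorem applied to a suitable subgroup $G$ of the automorphism group of $W_{a,b}$. The first task is to exhibit enough automorphisms. Homogenising the defining equation, one obtains
\[
X^{6}+Y^{6}+Z^{6}+a(X^{4}Y^{2}+X^{2}Y^{4}+X^{4}Z^{2}+X^{2}Z^{4}+Y^{4}Z^{2}+Y^{2}Z^{4})+bX^{2}Y^{2}Z^{2}=0,
\]
which is visibly invariant under every permutation of $\{X,Y,Z\}$ and under the sign changes $(X,Y,Z)\mapsto(\pm X,\pm Y,\pm Z)$. Modulo the projective centre, these generate a subgroup $G\le\mathrm{Aut}(W_{a,b})$ isomorphic to $S_{3}\ltimes(\mathbb{Z}/2\mathbb{Z})^{2}\cong S_{4}$ of order $24$.

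The next task is to choose subgroups $H_{1},\ldots,H_{n}\le G$ with $H_{i}\cap H_{j}=\{1_{G}\}$ for $i\ne j$ whose union is $G$ (or a convenient subgroup of it), and to identify each quotient $W_{a,b}/H_{i}$ with $V_{1}$, $V_{2}$, $V_{3}$, or with a curve whose Jacobian decomposes into these factors. The most transparent case is the normal Klein four subgroup $V$ of sign changes: setting $u=x^{2}$, $v=y^{2}$ transforms the defining equation of $W_{a,b}$ directly into
\[
u^{3}+v^{3}+1+a(u^{2}v+uv^{2}+u^{2}+u+v^{2}+v)+buv=0,
\]
which is precisely $V_{2}$, so $W_{a,b}/V\cong V_{2}$. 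A short Riemann--Hurwitz count shows that the quotient of $W_{a,b}$ by a cyclic subgroup of $G$ of order $4$ (obtained by composing a coordinate transposition with a sign change, a $4$-cycle of $S_{4}$) has genus $2$ and should be identified with $V_{3}$ via a careful choice of invariants; similarly the quotient by $S_{3}\le G$ (permuting $\{X,Y,Z\}$) has genus $1$ and should match $V_{1}$.

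Finally, one inserts these quotient descriptions into the Kani--Rosen formula, uses conjugacy in $G$ to merge isomorphic quotient factors, and exploits that $W_{a,b}/G$ is expected to be rational so that $\mathrm{Jac}(W_{a,b}/G)=0$. Matching exponents on the two sides, the desired isogeny $\mathrm{Jac}(W_{a,b})\sim V_{1}^{3}\times V_{2}\times\mathrm{Jac}(V_{3})^{3}$ should emerge, consistent with the dimension check $3\cdot 1+1+3\cdot 2=10=g(W_{a,b})$. It may well be that a single application of the theorem is insufficient and that the Jacobians of some intermediate quotients (for instance $W_{a,b}/\langle\tau\rangle$ for a transposition $\tau$, of genus $4$) must themselves be further decomposed by invoking the residual action of the centraliser of the chosen subgroup.

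The main technical obstacle is the explicit identification of the non-trivial quotients with $V_{1}$ and $V_{3}$ in the precise form stated: while $W_{a,b}/V\cong V_{2}$ is transparent from the substitution $(x,y)\mapsto(x^{2},y^{2})$, recovering the specific factorisations appearing in the defining equations of $V_{1}$ and of $V_{3}$ (in particular the palindromic cubic $x^{3}+ax^{2}+ax+1$ and its companion $(a+1)x^{3}+(2a+b)x^{2}+4ax+4$) requires selecting a good set of generators for the appropriate invariant subfield and performing a somewhat involved rational change of variables. A secondary difficulty is selecting a covering $\{H_{i}\}$ (or an iteration of coverings) whose exponents, after the Kani--Rosen bookkeeping, reproduce the asymmetric pattern $V_{1}^{3}\times V_{2}\times\mathrm{Jac}(V_{3})^{3}$.
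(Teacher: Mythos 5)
This proposition is imported verbatim from \cite[Proposition~10]{k15}; the present paper gives no proof, so your sketch can only be measured against the Kani--Rosen strategy that \cite{k15} (and this paper's other decompositions) actually use --- and your outline does follow that strategy in spirit. Your setup is sound: the homogenised equation is indeed invariant under the coordinate permutations and sign changes, giving a group $G\cong S_4$ of order $24$; the substitution $u=x^2$, $v=y^2$ really does exhibit $W_{a,b}/V\cong V_2$ for the sign-change Klein group $V$; and your genus counts (genus $2$ for the quotient by a cyclic group of order $4$, genus $1$ for the quotients by $S_3$ and by the Klein groups containing a transposition, genus $0$ for $W_{a,b}/G$) all check out by Riemann--Hurwitz.

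Nevertheless the proposal stops well short of a proof, and at the one point where it must be specific it runs into an obstruction you do not address. Kani--Rosen requires subgroups $H_1,\dots,H_n$ with \emph{pairwise trivial intersection} whose union is a group; but every cyclic subgroup of order $4$ in $G$ (the subgroups you need to manufacture the genus-$2$ factor $V_3$) contains one of the three sign-change involutions, i.e.\ meets $V$ nontrivially, so $V$ and the order-$4$ subgroups can never appear together in a single admissible covering. A one-shot application of the theorem therefore cannot produce the factors $V_2=W_{a,b}/V$ and $\mathrm{Jac}(V_3)$ simultaneously; one is forced to iterate, e.g.\ first apply the theorem to $V=\langle\sigma_1\rangle\cup\langle\sigma_2\rangle\cup\langle\sigma_1\sigma_2\rangle$ to get $\mathrm{Jac}(W_{a,b})\times\mathrm{Jac}(V_2)^2\sim\prod_{i=1}^{3}\mathrm{Jac}(W_{a,b}/\langle\sigma_i\rangle)$ with three mutually isomorphic genus-$4$ quotients (conjugate under the coordinate permutations), and then decompose each genus-$4$ quotient by the residual Klein four-group coming from the $D_4$-normaliser of $\langle\sigma_i\rangle$, so that each contributes one copy of $V_1$, of $V_2$ and of $\mathrm{Jac}(V_3)$ and the exponents $3,1,3$ fall out after cancelling $\mathrm{Jac}(V_2)^2$. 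You gesture at the possibility of iteration but never choose a covering, never verify the trivial-intersection hypothesis, and never do the exponent bookkeeping. The second, equally essential, gap is the one you yourself flag: the explicit hyperelliptic models of $V_1$ and $V_3$ (with the factorisations $((3a-b-3)x-a+3)(1+(a-3)x(1-x))$ and $-((a+1)x^3+(2a+b)x^2+4ax+4)(x^3+ax^2+ax+1)$) are never derived, yet these precise equations are what the rest of the paper consumes when Theorem~\ref{JacHyper} is applied to $V_3$. As written, the proposal is a credible plan with correct preliminary computations, not a proof.
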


In the cases where Theorem \ref{JacHyper} applies to the hyperelliptic curve $V_3$ of genus $2$, we can completely decompose the Jacobian of $V_3$, and hence we can completely decompose the Jacobian of $W_{a,b}$.

With a computer search we were able to find the following examples.

%We make a computer search when we can decompose the Jacobian completely, as the result we find new curves of genus $10$ attaining the Serre bound. Because we make computer search on the sextic $W_{a,b}$, when its quotient curve $V_3$ satisfies the assumption of Proposition \ref{JacHyper}, we can completely split the Jacobians of all curves below.

\begin{example}
The sextic $$W_{5,17}:x^6+y^6+1+5(x^4y^2+x^2y^4+x^4+x^2+y^4+y^2)+17x^2y^2=0$$
has $990$ rational points over $\F_{23^2}$ and it is a maximal curve of genus $10$.
Its automorphism group has $24$ elements. In  Section \ref{Quotient}, we will prove that this maximal curve is not a quotient of the Hermitian curve $\mathcal{H}_{23}$ defined over $\mathbb{F}_{23^2}$.

From Propostion \ref{JacWab}, we have that 
$\mathrm{Jac}(W_{5,17}) \sim V_1^3 \times V_2 \times \mathrm{Jac}(V_3)^3.$
By applying Theorem \ref{JacHyper} to
 $V_3\colon y^2=17(x-22)(x-14)(x-13)(x-11)(x-6)(x-5),$
we obtain that 
$\mathrm{Jac}(V_3) \sim E_1 \times E_2$
where
$$E_1:s^2=20t(t-1)(t-21),\quad
E_2:s^2=20t(t-1)(t-22).$$
%$$E_1:y^2=(x-12)(x-15)(x-21),\quad
%E_2:y^2=(x-12)(x-15)(x-18).$$
Since $V_1$, $V_2$ are birational equivalent  to
$$E_3:s^2=t(t-1)(t-3),\quad
E_4:s^2=6t(t-1)(t-22).$$
respectively, we have that 
$$\mathrm{Jac}(W_{5,17}) \sim E_1^3 \times E_2^3 \times E_3^3 \times E_4.$$

\end{example}

For $(p,a,b)\in \{(167,27,40)$, $(191,49,131)$, $(239,119,216)$,$(263,51,123)$,
$(431,257,322)$, $(503,33,274)$, $(599,352,358)$, $(719,254,557)$, $(887,388,609)$,
$\ldots$\}, the Wiman sextic $W_{a,b}$ is maximal over the finite field $\F_{p^2}$.
The automorphism group of all the explicit examples listed here
has $24$ elements.
\begin{example}
The sextic $$W_{7,120}:x^6+y^6+1+7(x^4y^2+x^2y^4+x^4+x^2+y^4+y^2)+120x^2y^2=0$$
has $7242678$ rational points over $\F_{193^3}$.
It has genus $10$ and it attains the Serre bound.
Its automorphism group has $24$ elements.

From Proposition \ref{JacWab}, we have that 
$\mathrm{Jac}(W_{7,120}) \sim V_1^3 \times V_2 \times \mathrm{Jac}(V_3)^3.$
Applying Theorem \ref{JacHyper} to
 $V_3\colon y^2=185(x-192)(x-122)(x-101)(x-110)(x-89)(x-86),$
 we have that  
$\mathrm{Jac}(V_3) \sim E_1 \times E_2$
where
$$E_1:s^2=38t(t-1)(t-56), \quad
E_2:s^2=38t(t-1)(t-7).$$
%$$E_1:y^2=(x-8)(x-128)(x-166), \quad
%E_2:y^2=(x-128)(x-133)(x-166).$$
Since $V_1$, $V_2$ are birational equivalent to
$$E_3:s^2=14t(t-1)(t-164), \quad
E_4:s^2=88t(t-1)(t-173)$$
respectively, we have that 
$$\mathrm{Jac}(W_{7,120}) \sim E_1^3 \times E_2^3 \times E_3^3 \times E_4.$$
\end{example}

For $(p,a,b)= (2909,2271,2350)$, $(4349,1169,4282)$ 
the sextic $W_{a,b}$ attains the Serre bound over the finite field $\F_{p^3}$.
Both automorphism groups have $24$ elements.
%\motoko{I add automorphism groups.}
\section{$W_{5,17}$ is not Galois-covered by $\mathcal{H}_{23}$ over $\mathbb{F}_{23^2}$}\label{Quotient}

In the previous section we proved that the sextic $W_{5,17}$ is an $\F_{23^2}$-maximal curve of genus $10$. In this section, we prove that this maximal curve is not a quotient curve of the Hermitian curve $\mathcal{H}_{23}$ defined over $\mathbb{F}_{23^2}$.

%The Hermitian curve $\mathcal H_q$ arises from the algebraic group $^2A_2(q)={\rm PGU}(3,q)$ of order $(q^3+1)q^3(q^2-1)$. It has genus $q(q-1)/2$ and is $\mathbb F_{q^2}$-maximal.

Recall that every subgroup $G$ of the automorphism group of $\mathcal{H}_q$, which is isomorphic to $\PGU(3,q)$, produces a quotient curve $\cH_q/G$, and the cover $\cH_q\rightarrow\cH_q/G$ is a Galois cover defined over $\mathbb{F}_{q^2}$. The degree of the different divisor $\Delta$ of this covering is given by the Riemann-Hurwitz formula \cite[Theorem 3.4.13]{Sti},
\begin{equation} \label{RHformula}
\Delta=(2g(\cH_q)-2)-|G|(2g(\cH_q/G)-2).
\end{equation}

On the other hand, $$\Delta=\sum_{\sigma\in G\setminus\{id\}}i(\sigma),$$ where $i(\sigma)\geq 0$ is given by the Hilbert's different formula \cite[Theorem 3.8.7]{Sti}, namely
\begin{equation}\label{contributo}
i(\sigma)=\sum_{P\in\cH_q(\bar{\mathbb{F}}_q)}v_P(\sigma(t)-t),
\end{equation}
where $t$ is a local parameter at $P$.

By analyzing the geometric properties of the elements $\sigma \in \PGU(3,q)$, it turns out that there are only a few possibilities for $i(\sigma)$. This is stated in the next results on how an element of a given order in  $\PGU(3,q)$ acts on the set of $\mathbb{F}_{q^2}$-rational points of $\cH_q$, denoted by $\cH_q(\mathbb{F}_{q^2})$. We recall that a linear collineation $\sigma$ of $\PG(2,\mathbb{K})$ is a $(P,\ell)$-\emph{perspectivity}, if $\sigma$ preserves  each line through the point $P$ (the \emph{center} of $\sigma$), and fixes each point on the line $\ell$ (the \emph{axis} of $\sigma$). A $(P,\ell)$-perspectivity is either an \emph{elation} or a \emph{homology} according as $P\in \ell$ or $P\notin\ell$. %A $(P,\ell)$-perspectivity is in  $\PGL(3,q^2)$ if and only if its center and its axis are in $\PG(2,\mathbb{F}_{q^2})$.

\begin{lemma}{\rm{(}\cite[Lemma 2.2]{MZRS}\rm{)}}\label{classificazione}
For a nontrivial element $\sigma\in \PGU(3,q)$, one of the following cases holds.
\begin{itemize}
\item[(A)] ${\rm ord}(\sigma)\mid(q+1)$ and $\sigma$ is a homology whose center $P$ is a point of $\cH_q$ and whose axis $\ell$ is a chord of $\cH_q(\mathbb{F}_{q^2})$ such that $(P,\ell)$ is a pole-polar pair with respect to the unitary polarity associated to $\cH_q(\mathbb{F}_{q^2})$.
\item[(B)] ${\rm ord}(\sigma)$ is coprime to $p$ and $\sigma$ fixes the vertices $P_1,P_2,P_3$ of a non-degenerate triangle $T$.
\begin{itemize}
\item[(B1)] The points $P_1,P_2,P_3$ are $\fqs$-rational, $P_1,P_2,P_3\notin\cH_q$ and the triangle $T$ is self-polar with respect to the unitary polarity associated to $\cH_q(\mathbb{F}_{q^2})$. Also, $\ord(\sigma)\mid(q+1)$.
\item[(B2)] The points $P_1,P_2,P_3$ are $\fqs$-rational, $P_1\notin\cH_q$, $P_2,P_3\in\cH_q$. %and the polar lines of $P_1,P_2,P_3$ are $P_2P_3$, $P_1P_2$, $P_1P_3$, respectively.
     Also, $\ord(\sigma)\mid(q^2-1)$ and $\ord(\sigma)\nmid(q+1)$.
\item[(B3)] The points $P_1,P_2,P_3$ have coordinates in $\mathbb{F}_{q^6}\setminus\mathbb{F}_{q^2}$, $P_1,P_2,P_3\in\cH_q$. %and their polar lines are $P_1P_2$, $P_2P_3$, $P_3P_1$, respectively.
    Also, $\ord(\sigma)\mid (q^2-q+1)$.
\end{itemize}
\item[(C)] ${\rm ord}(\sigma)=p$ and $\sigma$ is an elation whose center $P$ is a point of $\cH_q$ and whose axis $\ell$ is a tangent of $\cH_q(\mathbb{F}_{q^2})$; here $(P,\ell)$ is a pole-polar pair with respect to the unitary polarity associated to $\cH_q(\mathbb{F}_{q^2})$.
\item[(D)] ${\rm ord}(\sigma)=p$ with $p\ne2$, or ${\rm ord}(\sigma)=4$ and $p=2$. In this case $\sigma$ fixes an $\fqs$-rational point $P$, with $P \in \cH_q$, and a line $\ell$ which is a tangent  of $\cH_q(\mathbb{F}_{q^2})$; here $(P,\ell)$ is a pole-polar pair with respect to the unitary polarity associated to $\cH_q(\mathbb{F}_{q^2})$.
\item[(E)] $p\mid{\rm ord}(\sigma)$, $p^2\nmid{\rm ord}(\sigma)$, and ${\rm ord}(\sigma)\ne p$. In this case $\sigma$ fixes two $\fqs$-rational points $P,Q$, %the line $PQ$ linewise, and another line $\ell$ through $P$ linewise.
     with $P\in\cH_q$, $Q\notin\cH_q$. %$PQ$ is the polar line of $P$, and $\ell$ is the polar line of $Q$.
\end{itemize}
\end{lemma}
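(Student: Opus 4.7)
The plan is to classify nontrivial elements of $\PGU(3,q)$ by analyzing the Jordan canonical form of a lift $A \in \mathrm{GU}(3,q)$ acting on $\PG(2,\overline{\F_q})$, together with the Hermitian constraint $\bar A^{\top} H A = H$ (where $\bar A$ applies the Frobenius $x\mapsto x^q$ entry-wise and $H$ is the matrix of a fixed Hermitian form). First I would translate this matrix identity into constraints on eigenvalues: the multiset of eigenvalues of $A$ over $\overline{\F_q}$ is closed under the involution $\lambda\mapsto \bar\lambda^{-1}$, because $H$ pairs the $\lambda$-eigenspace with the $\bar\lambda^{-1}$-eigenspace. This is the key input yielding the divisibility restrictions on $\ord(\sigma)$ and the field of definition of the fixed points in each case.

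Next, split by order. When $\gcd(\ord(\sigma),p)=1$, the matrix $A$ is diagonalizable over $\overline{\F_q}$. If $A$ has three distinct eigenvalues, then $\sigma$ fixes a triangle $T=\{P_1,P_2,P_3\}$; Frobenius permutes the vertices, so either $T$ is $\F_{q^2}$-rational or the three vertices form a single Frobenius orbit over $\F_{q^6}$. In the $\F_{q^2}$-rational scenario, the Hermitian polarity distinguishes whether $T$ is self-polar, giving case (B1) with $\ord(\sigma)\mid q+1$, or whether two vertices lie on $\cH_q$ and the third is their common pole, giving case (B2) with $\ord(\sigma)\mid q^2-1$ but $\ord(\sigma)\nmid q+1$. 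The Frobenius-orbit scenario is case (B3); here I would invoke the Singer-like cyclic subgroup of order $q^2-q+1$ arising from the norm-one subgroup of $\F_{q^6}^*$ modulo $\F_{q^2}^*$, which stabilizes a Frobenius orbit of three $\F_{q^6}$-points of $\cH_q$. If instead $A$ has only two distinct eigenvalues, then $\sigma$ pointwise-fixes a line $\ell$ and fixes a complementary point $P$; the Hermitian constraint forces $(P,\ell)$ to be a pole-polar pair with $P \in \cH_q$, giving the homology of case (A) with $\ord(\sigma) \mid q+1$.

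Finally, for $p \mid \ord(\sigma)$, use the multiplicative Jordan decomposition $A=A_s A_u$ into commuting semisimple and unipotent parts. If $A$ is unipotent of order $p$ with Jordan type a $2\times 2$ block plus a $1\times 1$ block, then $\sigma$ pointwise-fixes a line with its center on that line; the Hermitian constraint places the center on $\cH_q$ with the line as its tangent, giving the elation of case (C). A single $3\times 3$ unipotent Jordan block, which has order $p$ for odd $p$ and order $4$ for $p=2$, fixes a unique point and a unique line without pointwise-fixing the line, giving case (D). When $\ord(\sigma)=p\cdot m$ with $m>1$ coprime to $p$, the unipotent factor forces $A_s$ to have a repeated eigenvalue; hence $A_s$ has exactly two distinct eigenvalues and $\sigma$ fixes exactly two points, of which the Hermitian constraint places one $\F_{q^2}$-rational fixed point on $\cH_q$ and one off, giving case (E). The main obstacle is the meticulous field-of-definition analysis in case (B3), together with checking the precise divisibility conditions on $\ord(\sigma)$ in each case; these follow from carefully tracking the eigenvalue involution $\lambda\mapsto \bar\lambda^{-1}$ under the Galois descent $\F_{q^6}\to \F_{q^2}$ and the cyclic orders of the finite unitary tori.
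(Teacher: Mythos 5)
First, a point of context: the paper does not prove this lemma at all --- it is quoted verbatim, with citation, from \cite[Lemma 2.2]{MZRS}, and the classification itself goes back to Mitchell's and Hartley's determination of the subgroups of $\mathrm{PSU}(3,q)$. So there is no in-paper argument to compare against. Your outline is essentially the standard proof of the cited result: lift $\sigma$ to $A\in\mathrm{GU}(3,q)$, exploit the fact that the unitary condition makes the eigenvalue multiset stable under $\lambda\mapsto\lambda^{-q}$, split according to whether $p$ divides $\ord(\sigma)$, and read off the fixed-point configuration from the Jordan type; the identification of (C), (D), (E) via the unipotent part and of (B1), (B2), (B3) via the maximal tori of orders dividing $(q+1)^2$, $q^2-1$, $q^2-q+1$ are the right ingredients.

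Two steps need repair. (i) In the semisimple case with three distinct eigenvalues you assert that the fixed triangle is either $\F_{q^2}$-rational or a single Frobenius orbit over $\F_{q^6}$; a priori the orbit pattern $1+2$ is also possible, with two vertices conjugate over $\F_{q^4}$. Excluding it is precisely where the involution $\lambda\mapsto\lambda^{-q}$ must be used: if $\lambda\in\F_{q^4}\setminus\F_{q^2}$ were an eigenvalue, then the eigenvalue set would have to be stable under both $\lambda\mapsto\lambda^{q^2}$ and $\lambda\mapsto\lambda^{-q}$, and each of the three resulting identifications forces $\lambda^{q+1}=1$, hence $\lambda\in\F_{q^2}$, a contradiction. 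This should be spelled out rather than asserted. (ii) Your treatment of case (A) cannot work as written: a homology's center never lies on its axis, so if $(P,\ell)$ is a pole--polar pair for a homology then $P$ is non-isotropic, i.e.\ $P\notin\cH_q$, and $\ell$ is then a chord of $\cH_q(\mathbb{F}_{q^2})$ --- in the model $x^{q+1}+y^{q+1}+z^{q+1}=0$ the homology $\mathrm{diag}(a,1,1)$ with $a^{q+1}=1$ has center $(1:0:0)\notin\cH_q$ and axis $x=0$ meeting $\cH_q$ in $q+1$ rational points. The version of (A) reproduced in the paper, which places the center on $\cH_q$, appears to be a transcription slip from \cite{MZRS}; your sketch inherits it, but no correct argument can ``force $P\in\cH_q$'' there, since an isotropic center with tangent axis yields an elation (case (C)), not a homology.
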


In the rest of the section, a nontrivial element of $\PGU(3,q)$ is said to be of type (A), (B), (B1), (B2), (B3), (C), (D), or (E), as given in Lemma \ref{classificazione}.

\begin{lemma}{\rm{(}\cite[Theorem 2.7]{MZRS}\rm{)}}\label{Lem:copertura}
For a nontrivial element $\sigma\in \PGU(3,q)$ one of the following cases occurs.
\begin{enumerate}
\item If $\ord(\sigma)=2$ and $2\mid(q+1)$, then $i(\sigma)=q+1$.
\item If $\ord(\sigma)=3$, $3 \mid(q+1)$ and $\sigma$ is of type {\rm(B3)}, then $i(\sigma)=3$.
\item If $\ord(\sigma)\ne 2$, $\ord(\sigma)\mid(q+1)$ and $\sigma$ is of type {\rm(A)}, then $i(\sigma)=q+1$.
\item If $\ord(\sigma)\ne 2$, $\ord(\sigma)\mid(q+1)$ and $\sigma$ is of type {\rm(B1)}, then $i(\sigma)=0$.
\item If $\ord(\sigma)\mid(q^2-1)$ and $\ord(\sigma)\nmid(q+1)$, then $\sigma$ is of type {\rm(B2)} and $i(\sigma)=2$.
\item If $\ord(\sigma)\ne3$ and $\ord(\sigma)\mid(q^2-q+1)$, then $\sigma$ is of type {\rm(B3)} and $i(\sigma)=3$.
\item If $p=2$ and $\ord(\sigma)=4$, then $\sigma$ is of type {\rm(D)} and $i(\sigma)=2$.
\item If $\ord(\sigma)=p$, $p \ne2$ and $\sigma$ is of type {\rm(D)}, then $i(\sigma)=2$.
\item If $\ord(\sigma)=p$ and $\sigma$ is of type {\rm(C)}, then $i(\sigma)=q+2$.
\item If $\ord(\sigma)\ne p$, $p\mid\ord(\sigma)$ and $\ord(\sigma)\ne4$, then $\sigma$ is of type {\rm(E)} and $i(\sigma)=1$.
\end{enumerate}
\end{lemma}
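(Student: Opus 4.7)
The plan is to prove the lemma by combining Lemma \ref{classificazione}, which classifies $\sigma \in \PGU(3,q)$ by its fixed-point geometry on $\PG(2,\bar{\mathbb{F}}_q)$, with a case-by-case application of Hilbert's different formula \eqref{contributo}. The first step is, for each type of $\sigma$, to locate the set $\mathrm{Fix}(\sigma) \cap \cH_q(\bar{\mathbb{F}}_q)$, since only these points contribute to $i(\sigma)$. For a homology or elation, the geometric fixed locus in $\PG(2,\bar{\mathbb{F}}_q)$ is the axis together with the center, and one intersects the axis with $\cH_q$ using the fact that a line cuts $\cH_q$ in either $q+1$ points (chord) or one point of high contact (tangent). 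For types (B1)--(B3), (D), (E), the fixed locus on $\PG(2,\bar{\mathbb{F}}_q)$ is a triangle of three points, and Lemma \ref{classificazione} tells us which of these vertices belong to $\cH_q$.

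The second step is the local computation of $v_P(\sigma(t)-t)$ at each fixed point $P$, using coordinates adapted to an affine model $x^q+x=y^{q+1}$ of $\cH_q$. In the tame case $\gcd(\ord(\sigma),p)=1$, the automorphism $\sigma$ acts on the cotangent space $\mathfrak{m}_P/\mathfrak{m}_P^2$ as multiplication by a nontrivial root of unity $\zeta$, so $\sigma(t)-t=(\zeta-1)t+O(t^2)$ has valuation $1$, and every fixed point contributes exactly $1$ to $i(\sigma)$. Summing over the fixed-point set then immediately yields cases (1)--(6): for instance, in case (1) the involution has a chord as axis (hence $q+1$ intersections with $\cH_q$) with its center off $\cH_q$, giving $i(\sigma)=q+1$; in case (4) the three triangle vertices all lie off $\cH_q$, giving $0$; in cases (2) and (6) all three triangle vertices lie on $\cH_q$, giving $3$; and in case (5) exactly two vertices lie on $\cH_q$, giving $2$.

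In the wild case $p\mid \ord(\sigma)$, a more refined local analysis is needed. For the elation of type (C) (case (9)), writing $\sigma$ in affine coordinates at its center $P\in\cH_q$ as a translation along the tangent direction, and using the Hermitian relation $x^q+x=y^{q+1}$ to produce an explicit uniformizer $t$ at $P$, a direct computation of $\sigma(t)-t$ yields valuation $q+2$. Cases (7) and (8), corresponding to wild elements of type (D) with a single fixed point on $\cH_q$, are handled by analogous but shorter local computations giving contribution $2$. Finally, in case (10) one decomposes the mixed-order $\sigma$ as a commuting product of its $p$-part and $p'$-part; a joint analysis of their fixed loci shows that exactly one fixed point of $\sigma$ lies on $\cH_q$ and that the total local contribution there is $1$.

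The main obstacle is the wild-ramification computation in case (9): the value $q+2$ reflects the extraordinarily high order of contact between the axis of the elation and $\cH_q$ at the center, which forces a careful choice of uniformizer and a nontrivial expansion of $\sigma(t)-t$ using the Hermitian equation. Case (10) is also delicate, since one must separately track the tame and wild fixed loci inside a single cyclic group and rule out any additional fixed points on $\cH_q$. All remaining cases are routine once the geometric classification of Lemma \ref{classificazione} has been matched with the tame contribution rule described above.
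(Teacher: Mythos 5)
The paper does not prove this lemma at all: it is imported verbatim from \cite[Theorem 2.7]{MZRS} (which in turn builds on \cite{GSX} and \cite{CKT}), and the proof there follows exactly the strategy you describe --- match the fixed-point geometry of Lemma \ref{classificazione} against Hilbert's different formula, with each tame fixed point contributing $1$ via the faithful action on the cotangent space, and explicit local expansions in adapted coordinates on $x^q+x=y^{q+1}$ handling the wild cases. Your outline is correct and coincides with that argument; the only steps you assert rather than carry out (the valuations $q+2$ and $2$ in cases (7)--(9), and the single contribution of $1$ at the unique fixed point in case (10)) are precisely the computations done explicitly in the cited source.
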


Finally, the following useful corollary can be deduced from the proof of Theorem 5 in \cite{GK}.
\begin{proposition}\label{boundgruppo}
Let $\cY$ be the quotient curve $\cH_q/G$, where $G$ is a subgroup of $PGU(3,q)$. Then 
$$
\frac{|\cH_{q}(\mathbb{F}_{q^2})|}{|\cY(\mathbb{F}_{q^2}
)|}\leq |G|\leq \frac{2g(\cH_{q})-2}{2g(\cY)-2}.
$$
\end{proposition}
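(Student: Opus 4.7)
The proposition bundles together two rather different inequalities, so my plan is to address them independently.

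For the lower bound $|\cH_{q}(\mathbb{F}_{q^2})|/|\cY(\mathbb{F}_{q^2})| \leq |G|$, I would exploit the fact that the quotient map $\pi\colon \cH_q \to \cY = \cH_q/G$ is a Galois cover of degree $|G|$ defined over $\mathbb{F}_{q^2}$. Every $\mathbb{F}_{q^2}$-rational point of $\cH_q$ maps under $\pi$ to an $\mathbb{F}_{q^2}$-rational point of $\cY$, so $\cH_q(\mathbb{F}_{q^2})$ is contained in the union of the fibers over the points of $\cY(\mathbb{F}_{q^2})$. Since each fiber is a $G$-orbit, it has cardinality at most $|G|$. Summing yields $|\cH_q(\mathbb{F}_{q^2})| \leq |G|\cdot|\cY(\mathbb{F}_{q^2})|$, which rearranges to the desired lower bound (noting that $|\cY(\mathbb{F}_{q^2})|>0$, for instance because $\cH_q(\mathbb{F}_{q^2})$ is nonempty).

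For the upper bound $|G| \leq (2g(\cH_q)-2)/(2g(\cY)-2)$, the argument is a direct application of the Riemann--Hurwitz formula recalled in \eqref{RHformula}. Writing it as
\[
2g(\cH_q)-2 \;=\; |G|\bigl(2g(\cY)-2\bigr) + \deg(\Delta),
\]
where $\Delta$ is the different divisor of the cover $\cH_q \to \cY$, and using that $\deg(\Delta)\geq 0$, we obtain
\[
|G|\bigl(2g(\cY)-2\bigr) \;\leq\; 2g(\cH_q)-2.
\]
Dividing by $2g(\cY)-2$ (which is positive under the implicit assumption $g(\cY)\geq 2$, the only case in which the stated bound is nontrivial) yields the claimed inequality.

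Neither step is really an obstacle: the lower bound is a counting argument on fibers, the upper bound is Riemann--Hurwitz with the non-negativity of $\deg(\Delta)$. The only subtle point is the implicit hypothesis $g(\cY)\geq 2$ needed to make the ratio on the right meaningful; I would state this explicitly when invoking the proposition in the next section.
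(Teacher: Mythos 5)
Your proof is correct and follows the standard argument: the paper itself gives no proof of this proposition, only a pointer to the proof of Theorem 5 in \cite{GK}, and your two steps --- counting $\F_{q^2}$-rational points fiber by fiber (each fiber being a $G$-orbit of size at most $|G|$) for the lower bound, and the Riemann--Hurwitz formula together with $\deg\Delta\geq 0$ for the upper bound --- are exactly the ideas underlying that reference. Your caveat that the upper bound presupposes $g(\cY)\geq 2$ is well taken, and it is satisfied in the paper's only application of the proposition, where $g(\cY)=g(W_{5,17})=10$.
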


\begin{proposition}
$W_{5,17}$ is not Galois-covered by $\mathcal{H}_{23}$ over $\mathbb{F}_{23^2}$.   
\end{proposition}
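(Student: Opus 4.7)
The plan is to argue by contradiction: suppose $W_{5,17}\cong\mathcal{H}_{23}/G$ for some subgroup $G$ of $\operatorname{Aut}(\mathcal{H}_{23})\cong\PGU(3,23)$, and rule out every admissible $|G|$ in turn.

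First I would plug the data $g(\mathcal{H}_{23})=\binom{23}{2}=253$, $g(W_{5,17})=10$, $|\mathcal{H}_{23}(\mathbb{F}_{23^2})|=23^3+1=12168$, $|W_{5,17}(\mathbb{F}_{23^2})|=990$ into Proposition~\ref{boundgruppo} to get
\[
13\le |G|\le 28.
\]
Since $|\PGU(3,23)|=23^3\cdot 2^7\cdot 3^3\cdot 11\cdot 13^2$, the primes $5,7,17,19$ are forbidden, and the candidates reduce to
\[
|G|\in\{13,\,16,\,18,\,22,\,23,\,24,\,26,\,27\}.
\]

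For each such $n=|G|$, the Riemann--Hurwitz formula~\eqref{RHformula} forces the different to have degree $\Delta=504-18n$, while Lemma~\ref{Lem:copertura} restricts each nontrivial $i(\sigma)$ to the set $\{0,1,2,3,24,25\}$, the precise value being read off from the order and type (A, B1, B2, B3, C, D, or E) of $\sigma$ in the classification of Lemma~\ref{classificazione}. For $n\in\{13,23\}$ the group is cyclic of prime order and every non-identity element has the same type (B3 if $n=13$; C or D if $n=23$), giving $\Delta\in\{36\}$ or $\Delta\in\{44,\,550\}$, neither of which matches $504-18n$. For the composite orders $n\in\{16,18,22,24,26,27\}$ I would enumerate the isomorphism classes of groups of order $n$ whose element orders are all admissible in $\PGU(3,23)$ (so in particular no element of order $5$, $7$, $9$ or $27$), and for each such class solve the linear equation
\[
\sum_{d,T} n_{d,T}\,i(T)=504-18n
\]
in the counts $n_{d,T}$ of elements of order $d$ and type $T$, subject to Cauchy's theorem and to the element-order profile of the group. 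A short numerical check would then show that no consistent nonnegative integer solution arises.

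The main obstacle lies in the larger composite cases $n\in\{16,24,27\}$, where several group structures embed into $\PGU(3,23)$ and the purely numerical sieve is tighter; in particular the coincidence $|\operatorname{Aut}(W_{5,17})|=24$ makes $n=24$ look tempting. For these values I would supplement the argument by the observation that a Galois cover would induce an embedding $N_{\PGU(3,23)}(G)/G\hookrightarrow \operatorname{Aut}(W_{5,17})$, so $|N_{\PGU(3,23)}(G)|\le 24|G|$, and then invoke the Mitchell--Hartley classification of subgroups of $\PGU(3,q)$ to identify the actual candidate subgroups $G$ together with their normalizers. Combining these group-theoretic constraints with the ramification datum $\Delta=504-18n$ is expected to yield the final contradiction for every remaining value of $n$.
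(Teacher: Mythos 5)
Your overall strategy is exactly the paper's: bound $|G|$ via Proposition \ref{boundgruppo} and divisibility in $\PGU(3,23)$ to get $|G|\in\{13,16,18,22,23,24,26,27\}$, compute $\Delta=504-18|G|$ from Riemann--Hurwitz, and play this against the admissible values of $i(\sigma)$ from Lemma \ref{Lem:copertura}, supplemented by the embedding $N_{\PGU(3,23)}(G)/G\hookrightarrow\operatorname{Aut}(W_{5,17})\cong S_4$. The prime-order cases $13$ and $23$ and the cases $22$ and $26$ go through as you describe. The problem is that the cases you defer --- $16$, $24$ and especially $27$ --- are precisely where the work lies, and your proposal does not actually close them.

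Concretely, for $|G|=27$ your ``numerical sieve'' provably fails: since no element of $\PGU(3,23)$ has order $9$ (as $9$ divides none of $q+1=24$, $q^2-1=528$, $q^2-q+1=507$), all $26$ nontrivial elements have order $3$ and contribute $i(\sigma)\in\{0,3,24\}$, and $\Delta=18$ is realized by six elements of type (B3) and twenty of type (B1). Your fallback, the normalizer-quotient embedding into $S_4$, is asserted but not computed, and it is not clear it gives a contradiction either (a quotient of order $8$ isomorphic to $D_4$ would embed in $S_4$). The paper resolves this case by a different mechanism you do not have: it identifies the unique conjugacy class of order-$27$ subgroups explicitly and computes that the corresponding quotient of $\mathcal{H}_{23}$ has genus $7$, not $10$. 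Similarly, for $|G|\in\{16,24\}$ the paper must enumerate all conjugacy classes of subgroups (9 classes of order $16$, and the order-$24$ classes with at most $3$ involutions) and check, class by class, that $N_i/G_i$ fails to embed in $S_4$ either because its order does not divide $24$ or because it is abelian of order $12$ or $24$; your appeal to Mitchell--Hartley ``expected to yield the final contradiction'' leaves this entire verification undone. As written, the proposal is a correct plan for the easy cases and an unexecuted sketch for the hard ones, so it does not constitute a proof.
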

\begin{proof}
Assume, by the way of contradiction, that $W_{5,17}$ is Galois-covered by $\mathcal{H}_{23}$ over $\mathbb{F}_{23^2}$. So, there exists $G\leq \text{PGU}(3,23)$ such that $W_{5,17}$ is the quotient curve $\mathcal{H}_{23}/G$. First, by Proposition \ref{boundgruppo}, it follows that
$$
12<\frac{|\cH_{23}(\mathbb{F}_{23^2})|}{|W_{5,17}(\mathbb{F}_{23^2}
)|}\leq |G|\leq \frac{2g(\cH_{23})-2}{2g(W_{5,17})-2}=28.
$$
Furthermore, as $|G|$ must divide $|\text{PGU}(3,23)|=2^7\cdot 3^3\cdot 11 \cdot 13^2\cdot 23^3$, we have that $|G|\in \{13, 16,18,22,23,24,26,27\}$. Observe that, by the Riemann-Hurwitz formula, the degree of the different divisor $\Delta$ can be computed as 
\begin{equation}\label{eq:diff}
\Delta=(2g(\cH_{23})-2)-|G|(2g(W_{5,17})-2).
\end{equation}
To conclude the proof, we will rule out each of these possibilities for $|G|$.
\begin{itemize}
\item $|G|=13$. In this case $G$ contains exactly $12$ elements of order $13$, and hence Lemma \ref{Lem:copertura} yields $\Delta=12\cdot 3=36$, a contradiction with Equation \eqref{eq:diff}.
\item \textbf{$|G|=16$}. A MAGMA aided computation shows that there are $9$ subgroups (up to conjugation) of order $16$ in $\PGU(3,23)$, namely \textit{$G_i=S[i]$} where \textit{$S:=Subgroups(\PGU(3,23): OrderEqual:=16)$} and $i\in\{1,\ldots,9\}$. Let $N_i$ be the normalizer of $G_i$ in $\PGU(3,23)$ and $Q_i$ be the factor group $N_i/G_i$. Then, from Galois theory, $Q_i$ is a subgroup of $Aut(W_{5,7}) \cong {\rm S}_4$. By direct check with MAGMA, the order of $Q_i$ is not a divisor of $24$ for $i\in \{1,\ldots,5\}$, a contradiction. In the remaining cases we have $|Q_6|=|Q_7|=24$ and $|Q_8|=|Q_9|=12$.  However, all such groups are abelian, a contradiction with the structure of ${\rm S}_4$.
%Assume first that $G$ is not cyclic. Then, from Lemma \ref{Lem:copertura} the non-trivial elements of $G$ are either of type $(A)$ or of type $(B1)$. However, as $G$a and 
%$$
%\Delta=24i,
%$$
%where $i$ is the number of elements of $G$ of type $(A)$. As $\Delta=216$, we must have $i=9$. So, $G$ contains 

\item \textbf{$|G|=18$}. By Lemma \ref{Lem:copertura}, $G$ does not contain any non-trivial elements of orders $9$ and $18$. Therefore, by Sylow theorems, it contains exactly $8$ elements of order $3$, and Lemma \ref{Lem:copertura} yields
$$
180=\Delta=24\cdot i_2+8\cdot k+(17-8-i_2)\cdot l,
$$
where $i_2\in \{1,3,9\}$ is the number of involutions in $G$, $k\in \{0,3,24\}$ and $l\in\{0,24\}$. This is a contradiction as $180$ is not divisible by $8$.

\item \textbf{$|G|=22$}. In this case $G$ is isomorphic to either the cyclic or the dihedral group of order $22$. Now, Lemma \ref{Lem:copertura} yields
$$
\Delta=24+10\cdot 2+10\cdot 2,
$$
in the former case, and
$$
\Delta=24\cdot 11+10\cdot 2,
$$
in the latter case. In both cases this is a contradiction with $\Delta=108$.
\item \textbf{$|G|=23$}. In this case $G$ contains $22$ elements of order $23$, whence Lemma \ref{Lem:copertura} yields
$$
\Delta=22\cdot i,
$$
where $i\in\{2,25\}$. This is a contradiction with $\Delta=90$.

\item \textbf{$|G|=24$}. First observe that if $G$ contains more than $3$ involutions, Lemma \ref{Lem:copertura} yields $\Delta>24\cdot 3$, a contradiction with $\Delta=72$. Checking with MAGMA, there are $20$ subgroups (up to conjugation) of order $24$ in $\PGU(3,23)$ with at most $3$ involutions, namely $G_i=S[i]$ where \textit{$S:=Subgroups(\PGU(3,23): OrderEqual:=24);$} and $i\in\{1,\ldots,18\}\cup\{22,23\}$. 
For $i\in\{1,\ldots,18\}\cup\{22,23\}$, let $N_i$ be the normalizer of $G_i$ in $\PGU(3,23)$ and $Q_i$ be the factor group $N_i/G_i$. Then, if $i\in\{1,\ldots,10\}$, the order of $Q_i$ is not a divisor of $24$, a contradiction with $Q_i$ being a subgroup of ${\rm S}_4$. On the other hand, in the remaining cases $Q_i$ contains an abelian subgroup of order $12$, a contradiction with the structure of ${\rm S}_4$.
 
\item \textbf{$|G|=26$}. By Sylow theorems $G$ contains exactly $12$ elements of order $13$. Therefore Lemma \ref{Lem:copertura} yields
$$
\Delta\geq 12\cdot 3+24\cdot i,
$$
where $i\geq 1$ is the number of involutions in $G$. This is a contradiction with $\Delta=36$.

\item \textbf{$|G|=27$}. Checking with MAGMA, there is a unique (up to conjugation) subgroup of order $27$ in $\PGU(3,23)$, which is generated by the following automorphisms of $\mathcal{H}_{23}$:
$$
\left\{[x:y:z]\rightarrow [a_1x:a_2y:a_3z]\,\mid\, a_i^3=1, i=1,2,3
\right\}
$$
and
$$
[x:y:z]\rightarrow [z:x:y].
$$
A MAGMA computation shows that the quotient curve of $\cH_{23}$ by this group has genus $7$, a contradiction with $W_{5,17}$ having genus $10$.\qedhere
\end{itemize}
\end{proof}

\section{Acknowledgements} 

This research was supported by the Italian National Group for Algebraic and Geometric Structures and their Applications (GNSAGA - INdAM). The second author is funded by JSPS Grant-in-Aid for Scientific Research (C) 17K05344. The third author is funded by the project ``Metodi matematici per la firma digitale ed il cloud computing" (Programma Operativo Nazionale (PON) ``Ricerca e Innovazione" 2014-2020, University of Perugia).

\bibliographystyle{abbrv}
\bibliography{bibliography.bib}
\end{document}